\newlength{\bibitemsep}\setlength{\bibitemsep}{.2\baselineskip plus .05\baselineskip minus .05\baselineskip}
\newlength{\bibparskip}\setlength{\bibparskip}{0pt}
\let\oldthebibliography\thebibliography
\renewcommand\thebibliography[1]{%
  \oldthebibliography{#1}%
  \setlength{\parskip}{\bibitemsep}%
  \setlength{\itemsep}{\bibparskip}%
}
\newcommand\cS{{\mathcal S}}
\newcommand{\VolL}{\mathrm{Vol}\,L}
\newcommand{\ord}{\mathrm{ord}}
\newcommand{\Bc}{\mathrm{Bc}\,}
\newcommand{\Val}{\mathrm{Val}}
\newcommand{\CC}{\mathbb {C}}
\newcommand{\NN}{{\mathbb N}}
\newcommand{\PP}{{\mathbb P}}
\newcommand{\QQ}{{\mathbb Q}}
\newcommand{\RR}{{\mathbb R}}
\DeclareMathOperator{\Aut}{Aut}
\DeclareMathOperator{\Bl}{Bl}
\theoremstyle{plain}
\newtheorem{theorem}{Theorem}[section]
\newtheorem{proposition}[theorem]{Proposition}
\newtheorem{lemma}[theorem]{Lemma}
\newtheorem{claim}[theorem]{Claim}
\newtheorem{corollary}[theorem]{Corollary}
\newtheorem{conjecture}[theorem]{Conjecture}
\newtheorem{problem}[theorem]{Problem}
\theoremstyle{definition}
\newtheorem{definition}[theorem]{Definition}
\newtheorem{remark}[theorem]{Remark}
\def\KE{K\"ahler--Einstein }
\newcommand{\beq}{\begin{equation}}
\newcommand{\eeq}{\end{equation}}
\newcommand{\bpf}{\begin{proof}}
\newcommand{\epf}{\end{proof}}
\newcommand{\baligned}{\begin{aligned}}
\newcommand{\ealigned}{\end{aligned}}
\newcommand{\bdefn}{\begin{definition}}
\newcommand{\edefn}{\end{definition}}
\newcommand{\bremark}{\begin{remark}}
\newcommand{\eremark}{\end{remark}}
\newcommand{\bconj}{\begin{conjecture}}
\newcommand{\econj}{\end{conjecture}}
\newcommand{\bcor}{\begin{corollary}}
\newcommand{\ecor}{\end{corollary}}
\newcommand{\blem}{\begin{lemma}}
\newcommand{\elem}{\end{lemma}}
\newcommand{\bclaim}{\begin{claim}}
\newcommand{\eclaim}{\end{claim}}
\newcommand{\bprob}{\begin{problem}}
\newcommand{\eprob}{\end{problem}}
\newcommand{\bprop}{\begin{proposition}}
\newcommand{\eprop}{\end{proposition}}
\newcommand{\bthm}{\begin{theorem}}
\newcommand{\ethm}{\end{theorem}}
\def\lb#1{\label{#1}}
\def\ra{\rightarrow}
\def\q{\quad}
\def\disp{\displaystyle}
\def\fin{\operatorname{fin}}
\def\Div{\operatorname{div}}
\def\Val{\operatorname{Val}}
\def\osc{\operatorname{osc}}
\newcommand{\ValX}{\Val_X}
\newcommand{\ValXfin}{{\Val}_X^{\fin}}
\newcommand{\ValXdiv}{{\Val}_X^{\Div}}
\newcommand{\bi}{\bibitem}
\title{
Stability thresholds for big classes}
\author{Chenzi Jin, Yanir A. Rubinstein, Gang Tian}
\date{January 25, 2025}
\begin{document}

\maketitle

\begin{abstract}
In 1987, the $\alpha$-invariant theorem gave a fundamental
criterion for existence of \KE metrics on smooth Fano manifolds.
In 2012, Odaka--Sano extended the framework to $\QQ$-Fano
varieties in terms of K-stability, and in 2017 Fujita
related this circle of ideas to the $\delta$-invariant of Fujita--Odaka.
We introduce new invariants on the big cone and prove
a generalization of the Tian--Odaka--Sano Theorem
to all big classes on varieties with klt singularities, and
moreover for all volume quantiles $\tau\in[0,1]$.
The special degenerate (collapsing) case $\tau=0$ on ample classes
recovers Odaka--Sano's theorem. 
This leads to many new twisted \KE metrics on big classes.
Of independent interest,
the proof involves a generalization to {\it sub-barycenters}
of the classical Neumann--Hammer Theorem from convex geometry.
%Other ingredients are Okounkov bodies and Nakayama's minimal vanishing order.

\end{abstract}

 \tableofcontents

\section{Introduction}

Recently, we introduced volume quantiles on the big cone of a projective variety
with klt singularities \cite{JRT}. 
% Volume quantiles can be formally defined as follows.
% considered as functions on
% $\hbox{\rm Big}(X)\times \ValX$ that 
Using these quantiles and their discrete versions we introduced new 
asymptotic algebraic geometric invariants $\bm{\delta}_\tau$ that generalize the log canonical
threshold  $\alpha$ (Tian's invariant \cite{Tian87}) 
and the stability threshold $\delta$ (Fujita--Odaka invariant \cite{FO18}).
These are defined for each $\tau\in[0,1]$ 
in terms of joint limits in $(k,m_k)$ where $k$ is the power 
of the line bundle and $m_k\in\{0,\ldots,d_k\}$ with $d_k:=\dim H^0(X,L^k)$
and $\tau=\lim_k m_k/d_k$ is the percentage of linearly independent 
sections used. In this way, $\alpha=\bm{\delta}_0$ corresponds to the degenerate,
or { collapsing},
case $\tau=0$
while $\delta=\bm{\delta}_1$ to the full mass, or birational, case $\tau=1$
(where a full basis is used).
This point of view can be used
to obtain asymptotics of $\alpha_k$ and $\delta_k$ invariants 
using Okounkov bodies and their discretizations, convexity, and blow-up analysis
inspired by Riemannian convergence theory \cite{JRT}.

In this paper, we wish to give another novel application of the invariants
$\bm{\delta}_\tau$, namely to the existence of \KE metrics and K-stability
in the most general setting possible, i.e., big classes (with some of
the results new even for ample classes). 
An important motivation for this is the recent work of Darvas--Zhang that shows
that K-stability implies the existence of twisted \KE metrics
also on big classes \cite{DarvasZhang}.
One of our main results 
is a generalized Tian--Odaka--Sano theorem, giving 
new valuative criteria for K-stability {\it on the big cone.}
These criteria are in terms of any volume quantile, and are new for all $\tau\in[0,1)$
but there is one unexpected, quite surprising, finding here: in the familiar case $\tau=0$ corresponding
to $\alpha$, {\it a new invariant/criteria emerges, different from $\alpha$}.
We call this new invariant $\tilde\alpha$. On the ample cone $\alpha=\tilde\alpha$
and we recover the Odaka--Sano theorem.

These criteria are {\it not} in terms of 
$\bm{\delta}_\tau$ but rather in terms of a new family of invariants
$\bm{\tilde\delta}_\tau$ that only agree with $\bm{\delta}_\tau$
on the nef cone when $\tau\in[0,1)$.
Remarkably, by taking the ``collapsing limit" $\tau\ra0$ appropriately in our sub-barycenter estimate,
the new invariant $\tilde\alpha$ emerges, which is different from the $\alpha$-invariant outside of the nef cone. 
It is related to the $\alpha$-invariant via a correction term involving
Nakayama's minimal vanishing order $\sigma$. Given the ubiquity of 
the $\alpha$-invariant in many estimates in complex geometry
as well as 
its central r\^ole in birational geometry \cite{Birkar} and in
exceptional quotient singularity theory \cite{CS-GT,CS-GT-6,CS-GT-9}, we believe
this new invariant $\tilde\alpha$ is of importance even beyond the existence
of singular \KE metrics.

Another motivation, in fact our original point of departure, was
to generalize Fujita's beautiful estimates relating $\alpha$ and $\delta$ \cite{Fuj19,F19}.
Fujita relies on an old and classical theorem of Neumann--Hammer \cite{Neumann,Hammer} from convex geometry that
gives an estimate on projections of barycenters of convex bodies 
(applied to Okounkov
bodies). It is a very interesting question of independent interest in convex geometry
whether these estimates generalize. We find such a generalization, to what we term
as {\it sub-barycenters}.
Roughly, imagine sliding a supporting hyperplane to the convex body $K$
and consider the sub-bodies $K_t$ it cuts out inside $K=K_0$; the sub-barycenters are the barycenters
of these sub-bodies, and $\tau=\tau(t)$, the volume quantile alluded to earlier is
the generalized inverse of the function $t\mapsto|K_t|/|K|$.

We end the paper with an explicit computation in the case of a cubic surface
with an Eckardt point. This case was the centerpiece of Tian's proof of the Calabi
Conjecture for del Pezzo surfaces \cite{Tian90-2} as it stood out as the most difficult del Pezzo surface to handle. It has generated many approaches over the past 35 years. We show how the quantile approach gives an elegant and conceptual alternative approach to a key step in this case.

\subsection{Valuative coercivity criteria on the ample cone}

A model result, going back on the analytic side to Tian in 1987,
is the following \cite[Theorem 2.1]{Tian87} (extended by Demailly--Koll\'ar to orbifolds
\cite{DK01}), stated in terms of an equivariant version of $\alpha$:

\bthm
\lb{TianThm} {\rm (Tian 1987)}
Let $X$ be a smooth $n$-dimensional Fano manifold, i.e., $-K_X$ is ample. Let $G$ be a compact
subgroup of $\Aut(X)$.
Then
$\alpha_G {>} \frac n{n+1}
$
implies the existence of a \KE metric.
\ethm

An algebraic counterpart of this result was missing for many years,
and was ultimately established by Odaka--Sano in 2012,
and subsequently refined 
 by Fujita and Blum--Jonsson 
 \cite[Theorem 1.4]{OS},\cite[Proposition 2.1]{Fuj19},~\cite[Theorem A]{BJ20}.

\bthm
\lb{OdakaSanoThm} {\rm (Odaka--Sano 2012)}
Let $X$ be a normal
projective variety of dimension $n$
with klt singularities and $-K_X$ ample.
Then
$\alpha \stackrel{(\ge)}{>} \frac n{n+1}
$
implies K-(semi)stability.
\ethm

Despite multiple generalizations, it has been an open problem whether
Theorems \ref{TianThm}--\ref{OdakaSanoThm} extend to big classes since all known proofs
use ampleness crucially. It is natural to ask:

\bprob
\lb{TianOdakaSanoProb}
Does Tian--Odaka--Sano's criterion generalize to the big cone?
\eprob

A major motivation is that since 1987 and until the introduction of 
the $\delta$-invariant in recent years \cite{FO18}, these theorems
were the main tool for constructing \KE metrics. Even these days,
there are many situations were the $\delta$-invariant is very difficult
to compute already for ample classes, whilst the $\alpha$-invariant is considerably simpler.
Finally, the $\alpha$-invariant holds a central and special place in birational geometry
in its own right, also known algebraically as the global log canonical thresholds (glct)
and it plays a central r\^ole in the study of 
boundedness questions in birational geometry \cite{Birkar}, and the theory of
exceptional quotient singularities \cite{CS-GT,CS-GT-6,CS-GT-9}.

One of our key observations is that the Tian--Odaka--Sano criterion does {\it not}
extend to the big cone. Rather surprisingly, one needs to introduce a new invariant,
that we denote $\tilde\alpha$ for which the criterion holds for all big classes.

% Thus, it seems  natural to ask
% whether the
% new invariants $\bm{\delta}_\tau$ are related to K-stability.

% A major motivation for Problem \ref{TianOdakaSanoProb} is that 
%  Theorem \ref{TianOdakaSanoThm}
% is one of the main tools for constructing smooth and singular \KE metrics.
% Thus, a positive
% solution of Problem \ref{TianOdakaSanoProb} would yield 
% many new
% K-(semi)stable varieties with $-K_X$ big. 

\subsection{A new \texorpdfstring{$\alpha$}{alpha}-invariant on the big cone}

Traditionally, the study of log canonical thresholds \`a la Cheltsov
and collaborators \cite{Cheltsov08,CS08} revolves around the maximal vanishing order, or
equivalently, in the analytic language: finding
the worst section(s) in the sense that it has the highest order of vanishing
along a given divisor (this often occurs in a birational model over $X$).
For our new invariant on the big cone, however, we will also need
to keep track of the {\it minimal} vanishing order. This is quite surprising,
as in some sense it corresponds to the best section(s).

\begin{definition}\lb{DivValDef}
    For any proper birational morphism $\pi:Y\to X$ where $Y$ is normal, a prime divisor $F$ on $Y$ is called a {\it divisor over $X$}. It determines a discrete valuation (i.e., integer valued) $\ord_F$ on the function field $\CC(Y)\cong\CC(X)$ (i.e., field of rational functions on $X$), called the {\it vanishing order} along $F$. See \cite[\S II.6]{Har}. A {\it divisorial valuation} $v$ is in the form of $v=v_{C,F}:=C\,\ord_F$ for some $C>0$ and $F$ over $X$. For a section $s\in H^0(X,L)$ of some line bundle $L$,
    $$
        v_{C,F}\left(s\right):=C\,\ord_F\left(\pi^*s\right).
    $$
    Let $\ValXdiv$ denote the set of all divisorial valuations.
\end{definition}

\bdefn
Fix a big line bundle $L$. For $k\in\NN$, let $R_k:=H^0(X,kL)$. Denote $\sigma:\ValXdiv\ra\RR_+$ 
the minimal vanishing order, i.e., $\sigma(v)=\inf_k\inf_{s\in R_k}v(s)/k$,
by  $\cS_0:\ValXdiv\ra\RR_+$ 
the maximal vanishing order, i.e., $\cS_0(v)=\sup_k\sup_{s\in R_k}v(s)/k$,
and by $A:\ValXdiv\ra\RR_+$
the log discrepancy, i.e., for any prime divisor $F$ on $Y$ with $\pi:Y\to X$,
$$
    A\left(\ord_F\right):=1+\ord_F\left(K_Y-\pi^*K_X\right),
$$
and
$$
    A\left(C\,\ord_F\right)=C\cdot A\left(\ord_F\right).
$$
%(see \S\ref{generalized BJ}).
\edefn

\bdefn
\label{tildealphaDef}
Set
%(see \S\ref{prelim})
$$\tilde\alpha
:=
            \displaystyle\inf_{v\in \ValXdiv} \frac A{\cS_0+\sigma/n}.
$$
\edefn

Note that $\sigma\equiv0$ for ample line bundles \cite[Proposition 2.1]{Fuj19}
so $\alpha=\tilde\alpha$ for nef classes.

We resolve Problem \ref{TianOdakaSanoProb} by the following theorem,
generalizing Theorem \ref{OdakaSanoThm}.

\bthm
\lb{AlphaBigThm}
Let $X$ be a normal
projective variety of dimension $n$
with klt singularities and $-K_X$ big.
Then
$\tilde\alpha \stackrel{(\ge)}{>} \frac n{n+1}
$
implies K-(semi)stability.
\ethm

\subsection{Coercivity criteria on the big cone for all quantiles}

Our next result simultaneously extends Theorem \ref{AlphaBigThm} to all volume
quantiles $\tau\in[0,1]$ and to all big $L$ (not just $L=-K_X$). 
Via the work of Darvas--Zhang \cite{DarvasZhang} this can
be used to construct many new singular twisted \KE metrics on big classes.

A key ingredient are the invariants $\bm{\delta}_\tau$ introduced in our previous work \cite{JRT}.

\begin{definition}\label{prelimdef}
    For $v=C\ord_F\in\ValXdiv$, we may assume $C=1$. Let $\Delta$ denote the associated Okounkov body defined in \cite[\S2.5]{JRT}, and $p:\RR^n\to\RR$ the projection to the first coordinate. For $\tau\in[0,1]$, let $Q_v:=Q_v(\tau)\in[\min_\Delta p,\max_\Delta p]$ be the unique number satisfying
    $|\Delta_{\geq Q_v}|=\tau|\Delta|$, where $\Delta_{\geq t}:=\{x\in\Delta\,:\,p(x)\geq t\}$ for $t\in[\min_\Delta p,\max_\Delta p]$. Denote
    $$
        \mu_v:=p_*\left(\frac{\mu_\Delta}{\left|\Delta\right|}\right),
    $$
    where $\mu_\Delta$ is the Lebesgue measure on $\Delta$,
    $$
        \cS_\tau\left(v\right):=p\left(\Bc\Delta_{\geq Q_v\left(\tau\right)}\right),
    $$
    and
    $$
        \bm{\delta}_\tau:=\inf_{\ValXdiv}\frac{A}{S_\tau}.
    $$
\end{definition}

Similarly to Definition \ref{tildealphaDef}, these invariants must be normalized
in a delicate manner in order to encode
K/Ding-stability on the entire big cone.

\bdefn
\label{tildedeltatauDef}
Let  $\sigma:\ValXdiv\ra\RR_+$ be 
the minimal vanishing order (see \S\ref{generalized BJ}).
Set
$$\bm{\tilde\delta}_\tau:=
\begin{cases}
            \displaystyle\inf_{v\in \ValXdiv} \frac A{\cS_0+\sigma/n}=:\tilde\alpha,&\tau=0,\\
            \\
            \displaystyle\inf_{v\in\ValXdiv} \frac A{\cS_\tau+\frac{1-\tau}{\tau}\left(1-\left(1-\tau\right)^\frac{1}{n}\right)\sigma},&\tau\in\left(0,1\right].
\end{cases}
$$
\edefn
Note that in the definition $\bm{\tilde\delta}_1=\delta$ always, while on the nef cone, 
$\bm{\tilde\delta}_\tau=\bm{\delta}_\tau$ and $\tilde\alpha=\alpha$.

\begin{remark}
    We can also define the discrete version of $\widetilde{\cS}_\tau$ and $\bm{\tilde\delta}_\tau$. For $k\in\NN(L)$ and $m\in\{1,\ldots,d_k\}$, 
    denote by $\widetilde{S}_{k,m}:\ValXdiv\ra[0,\infty]$,
    $$
        \widetilde{S}_{k,m}
        :=\frac{1}{km}\sum_{\ell=1}^mj_{k,\ell}+\frac{d_k-m}{m}\left(1-\left(1-\frac{m}{d_k}\right)^\frac{1}{n}\right)\frac{j_{k,d_k}}{k},
    $$
    and
    $$
        \bm{\tilde\delta}_{k,m}:=\inf_{v\in\ValXdiv}\frac{A}{\widetilde{S}_{k,m}}.
    $$
\end{remark}

The following is the 
generalization~of~Theorem~\ref{AlphaBigThm}.

\begin{theorem}\label{GenTianThm}
Let $X$ be a normal
projective variety of dimension $n$
with klt singularities, and $L$ a big line bundle on $X$.
If for some $\tau\in[0,1]$,
% $$
%         \bm{\tilde\delta}_\tau\leq\begin{cases}
%             \displaystyle\frac{n}{n+1}\delta,&\tau=0;\\
%             \\
%             \tau\left(1-\left(1-\tau\right)^\frac{n+1}{n}\right)^{-1}\delta,&0<\tau\leq1.
%         \end{cases}
%     $$
$$
        \bm{\tilde\delta}_\tau\stackrel{(\ge)}{>}\begin{cases}
            \disp\frac{n}{n+1},&\tau=0;\\ \\
            \disp\tau\left(1-\left(1-\tau\right)^\frac{n+1}{n}\right)^{-1},&0<\tau\leq1,
        \end{cases}
$$
then
$\delta\stackrel{(\ge)}{>}1$, i.e., $(X,L)$ is Ding-(semi)stable.
\end{theorem}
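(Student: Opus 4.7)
The strategy is to reduce the theorem to a single pointwise inequality on $\Val_X^{\Div}$ that bounds the $S$-invariant by $\widetilde{S}_\tau$ up to an explicit constant $c(\tau)$, and then invoke the known equivalence $\delta \stackrel{(\ge)}{>} 1 \Leftrightarrow$ Ding-(semi)stability (Fujita, Blum--Jonsson, Liu--Xu, etc.). Concretely, recall that $\delta = \bm{\tilde\delta}_1 = \inf_{v\in\ValXdiv} A(v)/S(v)$ where $S(v):=\cS_1(v) = p(\Bc\Delta)$. If for each $v$ we can establish
\beq\label{PointwiseIneq}
S(v) \;\le\; c(\tau)\, \widetilde{S}_\tau(v), \qquad c(\tau):=\begin{cases}\frac{n}{n+1}, & \tau=0,\\ \tau\bigl(1-(1-\tau)^{(n+1)/n}\bigr)^{-1}, & \tau\in(0,1],\end{cases}
\eeq
then taking the infimum of $A/S$ yields $\delta \ge \bm{\tilde\delta}_\tau/c(\tau)$, and the hypothesis gives $\delta \stackrel{(\ge)}{>} 1$.

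The inequality \eqref{PointwiseIneq} is purely a statement about the Okounkov body $\Delta=\Delta_v\subset\RR^n$ and its projection $p$ to the first coordinate. With $m:=\min_\Delta p=\sigma(v)$, $M:=\max_\Delta p=\cS_0(v)$, and the quantile $Q=Q_v(\tau)$ satisfying $|\Delta_{\ge Q}|=\tau|\Delta|$, the claim becomes
$$
p(\Bc\Delta) \;\le\; c(\tau)\biggl[\, p(\Bc\Delta_{\ge Q}) \;+\; \tfrac{1-\tau}{\tau}\bigl(1-(1-\tau)^{1/n}\bigr)\,m \,\biggr].
$$
For $\tau=0$ this is exactly the classical Neumann--Hammer estimate $p(\Bc\Delta) \le \tfrac{1}{n+1}m + \tfrac{n}{n+1}M$, which recovers Theorem~\ref{AlphaBigThm}. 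The case $\tau=1$ is a tautology ($c(1)=1$ and the right-hand side equals $S(v)$). The content of the theorem is therefore the interpolation for intermediate $\tau$, which is precisely the generalized sub-barycenter estimate advertised in the introduction.

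The plan for the convex-geometric step is to proceed by symmetrization. First, replace $\Delta$ by its one-dimensional push-forward $\mu_v = p_*(\mu_\Delta/|\Delta|)$; by the Brunn--Minkowski inequality the density of $\mu_v$, when written on $[m,M]$, is a concave function raised to power $n-1$ (up to normalization). Among all such ``$n$-concave'' probability measures on $[m,M]$ with a given quantile $Q$, the extremal case for the ratio between the full mean and the partial mean on $[Q,M]$ is attained by a cone (i.e., the measure whose support is the cone over the fiber at $p=m$ with apex at $p=M$), by a standard mass-transport/rearrangement argument. On that cone the required inequality reduces to an elementary one-variable computation that yields exactly the constant $c(\tau)$ and the correction coefficient $\tfrac{1-\tau}{\tau}(1-(1-\tau)^{1/n})$.

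The main obstacle is this last symmetrization step: the classical Neumann--Hammer argument compares $\Delta$ with its ``cone over a base'', but here we must simultaneously control two linked quantities --- the barycenter of $\Delta$ itself and that of the sub-body $\Delta_{\ge Q}$ --- while keeping track of the quantile constraint. The correct formulation of the extremal configuration, and the verification that the cone is optimal for this two-parameter problem (rather than, say, a truncated cone), is the technical heart of the argument. Once the sub-barycenter estimate is in hand, assembling Theorem~\ref{GenTianThm} is immediate: take the infimum of $A/\widetilde{S}_\tau$, apply \eqref{PointwiseIneq}, and invoke the valuative criterion for Ding-(semi)stability.
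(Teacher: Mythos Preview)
Your reduction is exactly the paper's: the pointwise inequality $\cS_1(v)\le c(\tau)\,\widetilde{\cS}_\tau(v)$ on $\ValXdiv$ (your \eqref{PointwiseIneq}) is precisely Proposition~\ref{S comparison} rearranged, and taking the infimum of $A/\cS_1$ then gives $\delta\ge\bm{\tilde\delta}_\tau/c(\tau)$, which is verbatim the argument in \S\ref{generalized Tian}. You also correctly push the problem down to the one-dimensional measure $\mu_v$ with $(n-1)$-concave density via Brunn--Minkowski, matching the paper's setup in \S\ref{sec comparison}.

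The divergence is in how the sub-barycenter inequality itself is established. You propose a symmetrization/extremal-cone argument and flag as ``the main obstacle'' the two-parameter optimization (full barycenter vs.\ partial barycenter under a quantile constraint). The paper avoids this obstacle entirely: instead of seeking an extremal configuration, it introduces the auxiliary rescaling
\[
F(s):=(1-\tau)^{-1/n}f\bigl((1-\tau)^{1/n}s\bigr),
\]
observes from concavity and $f(0)\ge0$ that $F\ge f$ on $[0,T]$, and uses $\int_0^{(1-\tau)^{-1/n}t}F^{n-1}=\int_0^T f^{n-1}$ to get the single-line bound $(1-\tau)^{-1/n}t\le T$. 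A short chain of inequalities then yields Proposition~\ref{concave comparison} directly, with no need to identify or verify an optimizer. This scaling trick is what makes the proof clean; your symmetrization route is plausible in spirit (and equality in the resulting inequality is indeed attained on a cone), but as you note it is not carried out, and the coupled constraint you worry about is a genuine complication that the paper's approach simply bypasses.
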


In particular, Theorem \ref{GenTianThm} implies that 
\beq\lb{FujitaTianGenIneq}
\bm{\tilde\delta}_\tau \stackrel{(\ge)}{>} \frac n{n+1-\root n \of \tau}
\eeq
for some $\tau\in[0,1]$
implies K/Ding-(semi)stability (Remark \ref{weakerTianineqRem}),
which interpolates elegantly between Theorem \ref{AlphaBigThm} (and in particular
Odaka--Sano's Theorem \ref{OdakaSanoThm}) and Fujita--Odaka's
Theorem.

The proof of Theorem \ref{GenTianThm} relies on two ingredients.
First, a generalization of the Neumann--Hammer's barycenter inequality from convex geometry
to the setting of {\it sub-barycenters} of convex bodies
(Theorem \ref{GenHammerThm}).
Second, a generalization of Fujita's first inequality (the 
case $\tau=0$ of Theorem \ref{GenTianThm}) proved in Proposition \ref{S comparison}
(Fujita made use of the original Neumann--Hammer inequality).
Finally, we also generalize Fujita's second inequality to hold for any $\tau$ and any big $L$  
(Proposition \ref{cStauProp}) using a sort of dual inequality 
to our generalized Neumann--Hammer inequality. 
This leads to yet another generalization of Tian--Odaka--Sano's criterion, namely \eqref{FujitaTianGenIneq}.

\bigskip
\noindent
{\bf Acknowledgments.} 
% Thanks to
% H. Blum, I. Cheltsov, and
% O. Regev
% for helpful discussions.
Research supported in part 
by grants NSF DMS-1906370,2204347, BSF 2020329, 
NSFC 11890660, 12341105, and NKRDPC 2020YFA0712800.

\section{Generalized Neumann--Hammer theorem: sub-barycenter estimates
}\label{sec comparison}

In this section we prove a generalization of Neumann--Hammer theorem (Theorem \ref{GenHammerThm}) and use it to prove several stability criteria (Theorem \ref{GenTianThm},
Corollary \ref{GenTianBigCor}, Remark \ref{weakerTianineqRem}).

\subsection{A slice-projection generalization of the Neumann--Hammer barycenter estimate}

Let $K$ be a convex body in $\RR^n$, and let 
$\Bc K:=\int_K xdx/|K|\in\RR^n$ denote its barycenter.
Let 
$p:\RR^n\ra\RR$
denote the projection to a coordinate (i.e., $p(x)=x_i$ for some $i\in\{1,\ldots,n\}$).
A classical consequence of the Neumann--Hammer Theorem 
\cite{Neumann},\cite[Theorem II]{Hammer} is
(the result is not quite stated this way in the literature):

\begin{lemma}\label{conv lem}
    Let $K$ be a convex body in $\RR^n$.
    Then 
    $$
    \frac{n}{n+1}\min_K p+\frac{1}{n+1}\max_K p
    \le
    p(\Bc K)
    \leq 
    \frac{1}{n+1}\min_K p+\frac{n}{n+1}\max_K p.
    $$
\end{lemma}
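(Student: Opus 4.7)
The plan is to recognize this lemma as a reformulation of the classical Neumann--Hammer centroid theorem and to prove it by reducing to a one-variable extremal problem about concave functions.

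First, after an affine change I may assume $p$ is the first coordinate; set $a:=\min_K p$ and $b:=\max_K p$. Introducing the slice function $f(t):=\mathrm{vol}_{n-1}(K\cap p^{-1}(t))$, Fubini gives
$$
p(\Bc K)=\frac{\int_a^b t\,f(t)\,dt}{\int_a^b f(t)\,dt},
$$
and the Brunn--Minkowski inequality for slices of convex bodies ensures that $g:=f^{1/(n-1)}$ is non-negative and concave on $[a,b]$. The lemma thus reduces to: for every non-negative concave $g$ on $[a,b]$,
$$
\frac{n}{n+1}a+\frac{1}{n+1}b \;\le\; \frac{\int_a^b t\,g(t)^{n-1}\,dt}{\int_a^b g(t)^{n-1}\,dt} \;\le\; \frac{1}{n+1}a+\frac{n}{n+1}b.
$$
The involution $t\mapsto a+b-t$ preserves the class of non-negative concave functions and interchanges the two bounds, so only the lower one needs proof.

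The candidate extremizer is the linear \emph{cone profile} $g(t)=b-t$, geometrically corresponding to $K$ being a cone with base at $\{p=a\}$ and apex at $\{p=b\}$. A direct computation using $u=b-t$ and the beta integral yields centroid exactly $(na+b)/(n+1)$, saturating the bound. For a general $g$, concavity combined with $g(b)\ge 0$ gives $g(t)\ge g_0(t):=g(a)(b-t)/(b-a)$ pointwise, so $g_0$ is an explicit cone minorant of $g$. Since $g_0$ itself achieves equality by scale-invariance, the inequality for $g$ reduces to showing that the non-negative excess $h(t):=g(t)^{n-1}-g_0(t)^{n-1}$, which vanishes at $t=a$, has centroid at least $(na+b)/(n+1)$ on $[a,b]$.

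The main obstacle is this last one-variable centroid estimate. When $n=2$ (so that $g^{n-1}=g$ is itself concave), two integrations by parts produce the clean identity
$$
\int_a^b\Bigl(\tfrac{3(t-a)}{b-a}-1\Bigr)g(t)\,dt
\;=\;\tfrac{b-a}{2}\,g(b)\;+\;\int_a^b \tfrac{(t-a)^2(t-b)}{2(b-a)}\,g''(t)\,dt \;\ge\; 0,
$$
manifestly non-negative since $(t-a)^2(t-b)\le 0$ on $[a,b]$ and $g''\le 0$ as a non-positive measure. For $n\ge 3$ the second derivative of $g^{n-1}$ is of indefinite sign and this direct route fails; my approach would be to Schwarz-symmetrize $K$ in the $(n-1)$ directions perpendicular to $p$, which preserves $f(t)$ (and hence the first-coordinate centroid) and reduces the problem to a body of revolution, and then to invoke the classical Neumann--Hammer centroid bound for $\tfrac{1}{n-1}$-concave measures on $\RR$, itself typically proved by a rearrangement/cone-comparison argument of the sort sketched above.
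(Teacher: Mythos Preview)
Your reduction to the one-variable problem is correct and is exactly the framework the paper uses: Fubini plus Brunn--Minkowski give a concave $g=f^{1/(n-1)}$ on $[a,b]$, and the $t\mapsto a+b-t$ symmetry reduces to one of the two bounds. Your $n=2$ integration-by-parts computation is also fine.

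The gap is the step for $n\ge 3$. After you introduce the cone minorant $g_0$, the ``reduction'' to showing that $h=g^{n-1}-g_0^{n-1}$ has centroid at least $(na+b)/(n+1)$ is not a simplification: $h\ge 0$ and $h(a)=0$ alone do not force this (a bump of $h$ near $a$ would violate it), so you are still using the full concavity of $g$ in a way you have not made explicit. You concede as much, but your proposed fix is circular. Schwarz symmetrization in the fibers of $p$ leaves $f(t)$ unchanged, so it does nothing to the one-variable problem; after symmetrizing you face exactly the same inequality for the same $g$. And ``invoke the classical Neumann--Hammer centroid bound for $\tfrac{1}{n-1}$-concave measures on $\RR$'' is precisely the statement of the lemma, so you are assuming what is to be proved.

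The paper's argument is different in kind. It proves a stronger, parametrized inequality (Proposition~\ref{concave comparison}) by a rescaling trick rather than a cone comparison: for fixed $t$ and $\tau=V_{\ge t}/V_{\ge 0}$, set $F(s):=(1-\tau)^{-1/n}f\big((1-\tau)^{1/n}s\big)$. Concavity together with $f(0)\ge 0$ gives the pointwise bound $F\ge f$ on $[0,T]$, and a change of variable matches total masses; from these two facts the inequality drops out for every $n$ without needing to control $(g^{n-1})''$. Lemma~\ref{conv lem} is then recovered as the limiting case $\tau\to 0$ (equivalently $t\to T$) of Theorem~\ref{GenHammerThm}. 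If you want a self-contained proof, this dilation argument is the missing idea; the cone-minorant route, as written, does not close.
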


The main result of this section is the following generalization
to sub-barycenters.
Let
$$
K_{\ge t}:=K\cap\{x\in\RR^n\,:\,p(x)\ge t\},
\q
K_{\le t}:=K\cap\{x\in\RR^n\,:\,p(x)\le t\}, \q t\in[\min_Kp,\max_Kp].
$$

\bthm\lb{GenHammerThm}
For a convex body $K\subset\RR^n$,
$$
\frac{p(\Bc K_{\ge t})-\min_Kp}{p(\Bc K)-\min_Kp}
\ge
\frac{|K|}{|K_{\ge t}|}\bigg(
1-\Big(1-\frac{|K_{\ge t}|}{|K|}\Big)^{\frac{n+1}n}
\bigg),
\q t\in \Big[\min_Kp,\max_Kp\Big),
$$
and
$$
\frac{\max_Kp-p(\Bc K_{\le t})}{\max_Kp-p(\Bc K)}
\ge
\frac{|K|}{|K_{\le t}|}\bigg(
1-\Big(1-\frac{|K_{\le t}|}{|K|}\Big)^{\frac{n+1}n}
\bigg),
\q t\in \Big(\min_Kp,\max_Kp\Big].
$$
\ethm
Lemma \ref{conv lem}, rephrased as 
$\frac{1}{n+1}\osc_K p\le p(\Bc K)-\min_Kp\leq \frac{n}{n+1}\osc_K p$,
follows from Theorem \ref{GenHammerThm}
in the limits 
$t\ra \max p(K)$ of the first inequality (gives the upper bound) and $t\ra \min p(K)$ of the second (gives the lower bound). 

\begin{proof}[Proof of Theorem \ref{GenHammerThm} (assuming Proposition \ref{concave comparison})]
Note that 
$$
p(\Bc K_{\ge t}))
=
|K_{\ge t}|^{-1}\int_t^\infty s\big|p^{-1}(s)\cap K\big|ds=
\frac{\disp\int_t^\infty s\big|p^{-1}(s)\cap K\big|ds}
{\disp\int_t^\infty \big|p^{-1}(s)\cap K\big|ds}.
$$
By convexity of $K$, for $\lambda\in(0,1)$,
$(1-\lambda)p^{-1}(s)\cap K+\lambda p^{-1}(s')\cap K
\subset p^{-1}((1-\lambda)s+\lambda s')\cap K$.
Together with Brunn--Minkowski,
\begin{align*}
    \left(1-\lambda\right)\Big|p^{-1}(s)\cap K\Big|^\frac{1}{n-1}
+
\lambda\Big|p^{-1}(s')\cap K\Big|^\frac{1}{n-1}
&\le
\Big|\left(1-\lambda\right)p^{-1}(s)\cap K
+
\lambda p^{-1}(s')\cap K
\Big|^\frac{1}{n-1}\\
&\le
\Big|p^{-1}\Big(\left(1-\lambda\right)s+\lambda s'\Big)\cap K\Big|^\frac{1}{n-1}.
\end{align*}
In other words,
$s\to|p^{-1}(s)\cap K|^\frac{1}{n-1}$ is a concave function on $p(K)$.
Thus, the first inequality of Theorem \ref{GenHammerThm} is a special case
of Proposition \ref{concave comparison} 
for
$f(s):=|p^{-1}(s)\cap K\big|^{\frac1{n-1}}$,
with
$T=\osc_Kp$
and $t\in [0,\osc_Kp)$ (since for a convex body $K$
and for $t$ in this range $|K_{\ge t}|>0$).
The second inequality follows from 
Proposition \ref{concave comparison} applied to
the concave function $s\mapsto f(\osc_Kp-s)$ on $[0,\osc_Kp]$,
followed by the change of variable
$s\mapsto \osc_Kp-s$ with $s\in(0,\osc_Kp]$.
\epf

\subsection{Generalized functional Neumann--Hammer inequality}

The functional version of Theorem \ref{GenHammerThm} is: 

\bprop\label{concave comparison}
Fix $T>0$ and $n\in\NN$.
Let $f:[0,T]\to\RR_{\geq0}$ be concave
and not identically zero.
 Then
$$
 \frac{\disp\int_{t}^Tsf\left(s\right)^{n-1}ds}
 {\disp\int_0^Tsf\left(s\right)^{n-1}ds}\geq1-\left(1-\frac{\disp\int_{t}^Tf(s)^{n-1}ds}{\disp\int_0^Tf(s)^{n-1}ds}\right)^\frac{n+1}{n},
     \q  \hbox{\ for $t\in[0,T]$}.
$$
\eprop

Denote
\beq\lb{VtNotationEq}
V_{\le t}:=\int_{0}^tf(s)^{n-1}ds,
\q
V_{\ge t}:=\int_{t}^Tf(s)^{n-1}ds,
\q t\in[0,T],
\eeq
and
\beq\lb{btNotation}
b_{\le t}:=
\frac{\disp\int_0^{t}sf\left(s\right)^{n-1}ds}{V_{\le t}},
\q
b_{\ge t}:=\frac{\disp\int_t^{T}sf\left(s\right)^{n-1}ds}{V_{\ge t}},
\q
t\in(0,T).
\eeq
It will be convenient to work with the volume ratio
\beq\lb{taulemmaEq}
 \tau\equiv
 \tau_{\ge t}:=
 \frac{\disp\int_{t}^Tf(s)^{n-1}ds}{\disp\int_0^Tf(s)^{n-1}ds}=\frac{V_{\ge t}}{V_{\ge0}},
 \q  \hbox{\ for $t\in[0,T]$}.
\eeq

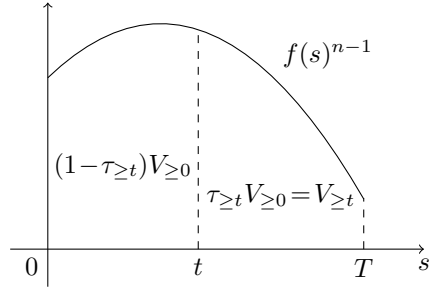
\begin{figure}[ht]
    \centering
        \begin{tikzpicture}
            \draw[->](-.5,0)--(5,0)node[below]{$s$};
            \draw[->](0,-.5)--(0,3.28);
            \draw(0,0)node[below left]{$0$};
            \draw[domain=0:4.2]plot(\x,{3-.32*(\x-1.5)^2});
            \draw(3,2.28)node[above right]{$f(s)^{n-1}$};
            \draw[dashed](4.2,0)node[below]{$T$}--(4.2,0.68);
            \draw[dashed](2,0)node[below]{$t$}--(2,2.92);
            \draw(3.1,0.7)node{$\tau_{\ge t} V_{\ge0}\!=\!V_{\ge t}$};
            \draw(1,1.1)node{$(1\!-\!\tau_{\ge t})V_{\ge0}$};
        \end{tikzpicture}
    \caption{An illustration of Proposition \ref{concave comparison}.} 
\end{figure}

\begin{proof}[Proof of Proposition \ref{concave comparison}]
    When $t=0$ the inequality holds. Fix $t\in(0,T]$. Then $\tau<1$. For $s\in[0,(1-\tau)^{-\frac{1}{n}}T]$, consider the auxiliary function
    $$
        F\left(s\right):=\left(1-\tau\right)^{-\frac{1}{n}}f\left(\left(1-\tau\right)^\frac{1}{n}s\right).
    $$
    Since $f$ is concave and $f(0)\geq0$, for $s\in[0,T]$,
    $$
        f\left(\left(1-\tau\right)^\frac{1}{n}s\right)\geq\left(1-\left(1-\tau\right)^\frac{1}{n}\right)f\left(0\right)+\left(1-\tau\right)^\frac{1}{n}f\left(s\right)\geq\left(1-\tau\right)^\frac{1}{n}f\left(s\right),
    $$
    i.e.,
    \begin{equation}\label{Ff comparison}
        F\left(s\right)\geq f\left(s\right).
    \end{equation}
    Now, change variable to get
    \begin{align}
        \int_0^{\left(1-\tau\right)^{-\frac{1}{n}}t}F\left(s\right)^{n-1}ds&=\left(1-\tau\right)^{-\frac{1}{n}}\int_0^tF\left(\left(1-\tau\right)^{-\frac{1}{n}}s\right)^{n-1}ds\nonumber\\
        &=\frac{1}{1-\tau}\int_0^tf\left(s\right)^{n-1}ds
        =V_{\geq0}\nonumber\\
        &=\int_0^Tf\left(s\right)^{n-1}ds\label{V reverse}\\
        &\leq\int_0^TF\left(s\right)^{n-1}ds,\nonumber
    \end{align}
    i.e.,
    \begin{equation}\label{tT comparison}
        \left(1-\tau\right)^{-\frac{1}{n}}t\leq T.
    \end{equation}
    Changing variable again and applying \eqref{Ff comparison}, \eqref{V reverse}, and \eqref{tT comparison},
    \begin{align*}
        \left(1-\tau\right)^{-\frac{n+1}{n}}\int_0^tsf\left(s\right)^{n-1}ds&=\left(1-\tau\right)^{-\frac{n-1}{n}}\int_0^{\left(1-\tau\right)^{-\frac{1}{n}}t}sf\left(\left(1-\tau\right)^\frac{1}{n}s\right)^{n-1}ds\\
        &=\int_0^{\left(1-\tau\right)^{-\frac{1}{n}}t}sF\left(s\right)^{n-1}ds\\
        &=\int_0^{\left(1-\tau\right)^{-\frac{1}{n}}t}sf\left(s\right)^{n-1}ds+\int_0^{\left(1-\tau\right)^{-\frac{1}{n}}t}s\left(F\left(s\right)^{n-1}-f\left(s\right)^{n-1}\right)ds\\
        &\begin{multlined}
            \leq\int_0^{\left(1-\tau\right)^{-\frac{1}{n}}t}sf\left(s\right)^{n-1}ds\\+\left(1-\tau\right)^{-\frac{1}{n}}t\int_0^{\left(1-\tau\right)^{-\frac{1}{n}}t}\left(F\left(s\right)^{n-1}-f\left(s\right)^{n-1}\right)ds
        \end{multlined}\\
        &\begin{multlined}
            =\int_0^{\left(1-\tau\right)^{-\frac{1}{n}}t}sf\left(s\right)^{n-1}ds\\+\left(1-\tau\right)^{-\frac{1}{n}}t\left(\int_0^Tf\left(s\right)^{n-1}ds-\int_0^{\left(1-\tau\right)^{-\frac{1}{n}}t}f\left(s\right)^{n-1}ds\right)
        \end{multlined}\\
        &=\int_0^{\left(1-\tau\right)^{-\frac{1}{n}}t}sf\left(s\right)^{n-1}ds+\left(1-\tau\right)^{-\frac{1}{n}}t\int_{\left(1-\tau\right)^{-\frac{1}{n}}t}^Tf\left(s\right)^{n-1}ds\\
        &\leq\int_0^{\left(1-\tau\right)^{-\frac{1}{n}}t}sf\left(s\right)^{n-1}ds+\int_{\left(1-\tau\right)^{-\frac{1}{n}}t}^Tsf\left(s\right)^{n-1}ds\\
        &=\int_0^Tsf\left(s\right)^{n-1}ds,
    \end{align*}
    i.e.,
    $$
        \frac{\disp\int_0^tsf\left(s\right)^{n-1}ds}{\disp\int_0^Tsf\left(s\right)^{n-1}ds}\leq\left(1-\tau\right)^\frac{n+1}{n}.
    $$
    This completes the proof.
\end{proof}

\begin{remark}
    Let $g\not\equiv0$ be a non-negative non-decreasing integrable function on $[0,T]$. A variant of the proof will yield
    $$
        \frac{\disp\int_{t}^Tg\left(s\right)f\left(s\right)^{n-1}ds}
        {\disp\int_0^Tg\left(s\right)f\left(s\right)^{n-1}ds}\geq1-\frac{\disp\int_{0}^ts^{n-1}g\left(s\right)ds}{\disp\int_0^{\tilde{t}}s^{n-1}g\left(s\right)ds},
        \q  \hbox{\ for $t\in[0,T]$},
    $$
    with $\tilde{t}$ given by the left hand side of \eqref{tT comparison}.
    For example, if $g(s)=s^p$ for $p\geq0$, then
    \beq\lb{genZhangIneq}
        \frac{\disp\int_{t}^Ts^pf\left(s\right)^{n-1}ds}
        {\disp\int_0^Ts^pf\left(s\right)^{n-1}ds}\geq1-\left(1-\tau\right)^\frac{n+p}{n},
    \eeq
     This is related to work of Zhang, who considered a different, neat way to interpolate $\alpha$ and $\delta$ (though he does not consider $\alpha_k$, $\delta_k$, nor asymptotics as in \cite{JRT}), by studying $\ell^p$ norms of the jumping function, $p=1$ giving $\delta$ and $p=\infty$ giving $\alpha$. Our methods, together with \eqref{genZhangIneq} (in the limit $\tau\ra0$), yield asymptotics for a quantized
     version of his invariants $\delta^{(p)}$ (or more
     general invariants parametrized by both $p$ and $\tau$), as well as
    generalizations of~his~main~results~(e.g.,~\cite[Theorem~1.8]{Zhang}).
\eremark

\section{A generalized Fujita inequality of the first kind}
\label{generalized BJ}
Fujita's first barycetner inequality \cite[Proposition 2.1]{Fuj19},
in the generality stated by Blum--Jonsson \cite[Proposition 3.11]{BJ20}, 
assumes $L$ is {\it ample} and reads
$$
\cS_0\geq\frac{n+1}{n}\cS_1, \q \hbox{ on $\ValXfin$}.
$$
The main result of this subsection 
is the following generalization to sub-barycenters.
We also allow $L$ to be big.
Recall that
$$\sigma(v)=\min_{\Delta}p_1.$$
For $v\in\ValXdiv$, this definition goes back at least to
Nakayama \cite[Definition 1.1, p. 79]{Nak}.

\begin{proposition}\label{S comparison}
    On $\ValXdiv$,
    $$
        \cS_\tau\geq\begin{cases}
            \displaystyle\frac{n+1}{n}\cS_1-\frac1n\sigma,&\tau=0,\\
            \\
            \displaystyle\frac{1}{\tau}\left(1-\left(1-\tau\right)^\frac{n+1}{n}\right)(\cS_1-\sigma)+\sigma,&\tau\in\left(0,1\right].
        \end{cases}
    $$
\end{proposition}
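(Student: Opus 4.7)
The plan is to derive Proposition \ref{S comparison} as a direct translation of the generalized Neumann--Hammer inequality (Theorem \ref{GenHammerThm}) into the language of Okounkov bodies, together with the straightforward identifications of the algebraic invariants with convex-geometric quantities.

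First, I would fix $v\in\ValXdiv$ and work with the associated Okounkov body $\Delta\subset\RR^n$ of Definition \ref{prelimdef}, with $p:\RR^n\to\RR$ the projection onto the first coordinate. The definitions give the dictionary $\sigma(v)=\min_\Delta p$ (Nakayama's minimal vanishing order realized on the Okounkov body), $\cS_1(v)=p(\Bc\Delta)$ (since $Q_v(1)=\min_\Delta p$ forces $\Delta_{\ge Q_v(1)}=\Delta$), $\cS_\tau(v)=p(\Bc\Delta_{\ge Q_v(\tau)})$ by definition, and $\cS_0(v)=\max_\Delta p$ by the standard description of maximal vanishing orders via Okounkov bodies. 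Moreover the quantile condition $|\Delta_{\ge Q_v(\tau)}|=\tau|\Delta|$ is precisely the hypothesis needed to apply Theorem \ref{GenHammerThm} with volume ratio $\tau$.

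For $\tau\in(0,1]$, I would set $t=Q_v(\tau)$ and apply the first inequality of Theorem \ref{GenHammerThm} to $K=\Delta$. Substituting the dictionary above, the inequality reads
$$
\frac{\cS_\tau(v)-\sigma(v)}{\cS_1(v)-\sigma(v)}\;\ge\;\frac{1}{\tau}\Big(1-(1-\tau)^{\frac{n+1}{n}}\Big),
$$
and a one-line rearrangement yields the stated bound
$$
\cS_\tau(v)\;\ge\;\frac{1}{\tau}\Big(1-(1-\tau)^{\frac{n+1}{n}}\Big)\bigl(\cS_1(v)-\sigma(v)\bigr)+\sigma(v).
$$

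For the degenerate case $\tau=0$, two equivalent routes are available. The direct route applies the upper estimate in Lemma \ref{conv lem} to $\Delta$, giving $\cS_1(v)\le\tfrac{1}{n+1}\sigma(v)+\tfrac{n}{n+1}\cS_0(v)$, which rearranges immediately to $\cS_0(v)\ge\tfrac{n+1}{n}\cS_1(v)-\tfrac{1}{n}\sigma(v)$. The conceptual route takes the limit $\tau\to 0^+$ of the inequality just proved, using the Taylor expansion $\lim_{\tau\to 0^+}\tfrac{1}{\tau}(1-(1-\tau)^{(n+1)/n})=\tfrac{n+1}{n}$ together with the continuity $\lim_{\tau\to 0^+}\cS_\tau(v)=\cS_0(v)$ (the thin top slice $\Delta_{\ge Q_v(\tau)}$ concentrates at $\{p=\max_\Delta p\}$). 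Either way, the two cases of the proposition fall out.

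The main substantive content is Theorem \ref{GenHammerThm}; the present proposition is essentially its algebro-geometric incarnation. The only step requiring any care is verifying the dictionary between $\sigma,\cS_0,\cS_1,\cS_\tau$ and the convex-geometric quantities $\min_\Delta p$, $\max_\Delta p$, $p(\Bc\Delta)$, $p(\Bc\Delta_{\ge Q_v(\tau)})$, which is routine given the definitions in \cite{JRT} and the definition of $\sigma$ recalled from \cite{Nak}. No additional obstacle is anticipated.
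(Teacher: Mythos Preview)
Your proposal is correct and follows essentially the same approach as the paper: both reduce the proposition to the generalized Neumann--Hammer inequality via the Okounkov-body dictionary, the only cosmetic difference being that the paper invokes the functional version (Proposition~\ref{concave comparison}) directly on the density $f(s)^{n-1}$ of $\mu_v$ after translating by $\sigma(v)$, rather than the convex-body version (Theorem~\ref{GenHammerThm}) applied to $K=\Delta$. For the $\tau=0$ case the paper uses monotonicity $\cS_0\ge\cS_\tau$ before letting $\tau\to0$ on the right-hand side (thereby avoiding the need to argue continuity $\cS_\tau\to\cS_0$), which is marginally cleaner than your ``conceptual route'' but equivalent in effect to your ``direct route'' via Lemma~\ref{conv lem}.
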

\begin{proof}
    Recall Definition \ref{prelimdef}. We can write $d\mu_v=f(s)^{n-1}ds$ for some concave function $f$ on $[\sigma(v),\cS_0(v)]$. 
    Also, $Q_v$ now takes values in $[\sigma(v),\cS_0(v)]$, and 
    $$
        \int_{\sigma(v)}^{\cS_0\left(v\right)}d\mu_v={{\VolL}/{n!}}, \q
        \int_{Q_v\left(\tau\right)}^{\cS_0\left(v\right)}d\mu_v=\tau{{\VolL}/{n!}}.
    $$
    Translating $\Delta$
    by $(\sigma(v),0,\ldots,0)$ amounts to the change of variable $s\!\mapsto \!s-\sigma(v)$. 
    By~Proposition~\ref{concave comparison},
    $$
        \frac{\cS_\tau\left(v\right)-\sigma(v)}{\cS_1\left(v\right)-\sigma(v)}
        =
        \frac{1}{\tau}\frac{\int_{Q_v\left(\tau\right)-\sigma(v)}^{\cS_0\left(v\right)-\sigma(v)}td\mu_v}
        {\int_{0}^{\cS_0\left(v\right)-\sigma(v)}td\mu_v}\geq\frac{1}{\tau}\left(1-\left(1-\tau\right)^\frac{n+1}{n}\right),\q
        \hbox{\ for $\tau\in(0,1]$}.
    $$
By monotonicity of $\cS_\tau$, 
    $$
        \frac{\cS_0\left(v\right)-\sigma(v)}{\cS_1\left(v\right)-\sigma(v)}\geq\frac{\cS_\tau\left(v\right)-\sigma(v)}{\cS_1\left(v\right)-\sigma(v)}\geq\frac{1}{\tau}\left(1-\left(1-\tau\right)^\frac{n+1}{n}\right),\q
        \hbox{\ for $\tau\in(0,1]$}.
    $$
    Finally, letting $\tau$ tend to $0$,
    $\disp
        \frac{\cS_0\left(v\right)-\sigma(v)}{\cS_1\left(v\right)-\sigma(v)}\geq\frac{n+1}{n}.
    $
    \end{proof}
Note that Proposition \ref{S comparison} implies that
$$
\frac{\cS_0}A+\frac1n\frac{\sigma}A\le \frac{n+1}n\q\Rightarrow\q \frac{\cS_1}A\le 1.
$$
In particular, defining
$$
\tilde\alpha:=\inf_{v\in\ValXdiv} \frac A{\cS_0+\sigma/n},
$$
gives the following generalization of Tian--Odaka--Sano's criterion to big $L$:
\bcor\lb{GenTianBigCor}
If $\tilde\alpha\stackrel{(\ge)}{>} \frac n{n+1}$ then $\delta\stackrel{(\ge)}{>}1$.
\ecor

\section{A generalized Fujita inequality of the second kind}
\label{generalized-2nd-Fuj}

Fujita's second barycenter inequality \cite[Lemma 2.2]{Fuj19}
reads
\beq\lb{Fuj2ndInEq}
\cS_1\ge\frac1{n+1}\cS_0, \q \hbox{ on $\ValXdiv$}.
\eeq
This generalizes to sub-barycenters as well.

\bprop
\lb{cStauProp}
For $\tau\in[0,1]$,
$\cS_\tau\ge (1-\tau^{1/n})\cS_0+\tau^{1/n}\cS_1$,
on $\ValXdiv$.
\eprop

To see why Proposition \ref{cStauProp}
is a generalization of \eqref{Fuj2ndInEq}, note that $\tau \cS_\tau$
is non-decreasing in $\tau$. In particular, $\cS_1\ge \tau \cS_\tau$.
Thus, Proposition \ref{cStauProp} implies 
$$
\cS_1\ge \frac{1-\tau^{1/n}}{\tau^{-1}-\tau^{1/n}}\cS_0, \q \tau\in(0,1).
$$
Note that $x\mapsto (1-x^{1/n})/(x^{-1}-x^{1/n})$ is increasing
on $(0,1)$, and its limit as $x\ra1$ is, by L'H\^opital's rule, 
$1/(n+1)$, implying \eqref{Fuj2ndInEq}.

\bpf[Proof of Proposition \ref{cStauProp}]
Assume first $\sigma(v)=0$.
Applying Proposition \ref{concave comparison} to $s\mapsto f(T-s)$, yields
$$
 \frac{\disp\int_{t}^Tsf\left(T-s\right)^{n-1}ds}
 {\disp\int_0^Tsf\left(T-s\right)^{n-1}ds}\geq1-\left(1-\frac{\disp\int_{t}^Tf(T-s)^{n-1}ds}{\disp\int_0^Tf(T-s)^{n-1}ds}\right)^\frac{n+1}{n},
     \q  \hbox{\ for $t\in[0,T]$}.
$$
Now, change variable to get
$$
 \frac{\disp\int_{0}^{T-t}(T-s)f\left(s\right)^{n-1}ds}
 {\disp\int_0^T(T-s)f\left(s\right)^{n-1}ds}\geq1-\left(1-\frac{\disp\int_{0}^{T-t}f(s)^{n-1}ds}{\disp\int_0^Tf(s)^{n-1}ds}\right)^\frac{n+1}{n},\q  \hbox{\ for $t\in[0,T]$},
$$
or
$$
 \frac{\disp\int_{0}^{t}(T-s)f\left(s\right)^{n-1}ds}
 {\disp\int_0^T(T-s)f\left(s\right)^{n-1}ds}\geq1-\left(1-\frac{\disp\int_{0}^{t}f(s)^{n-1}ds}{\disp\int_0^Tf(s)^{n-1}ds}\right)^\frac{n+1}{n},\q  \hbox{\ for $t\in[0,T]$},
$$
i.e., by \eqref{taulemmaEq},
$$
 \frac{\disp TV_{\le t}-\int_{0}^{t}sf\left(s\right)^{n-1}ds}
 {\disp TV_{\ge0}-\int_0^Tsf\left(s\right)^{n-1}ds}\geq1-\tau^\frac{n+1}{n},\q  \hbox{\ for $t\in[0,T]$}.
$$
Some further calculation yield combined with Definition \ref{prelimdef},
$$
\frac{(1-\tau)\cS_0+\tau\cS_\tau-\cS_1}{\cS_0-\cS_1}
\ge
{1-\tau^{\frac{n+1}n}},
$$
as desired. Since the inequality obtained is invariant under translation in
the $s$ variable, it holds even for $\sigma(v)>0$.
\epf

Several authors observed that Fujita's second inequality \eqref{Fuj2ndInEq} holds more generally
for $L$ merely big (as well as on $\ValXfin$) \cite[Lemma 2.6, Proposition 3.11, Remark 3.12]{BJ20}.
It is perhaps not entirely without interest to note an even stronger statement holds.

\bcor\lb{GenFujSecSigma}
On $\ValXfin$,
\beq\lb{FujitaNakIneq}
\cS_1
\ge
\frac1{n+1}\cS_0+\frac n{n+1}\sigma.
\eeq
\ecor

\bpf
 In fact, notice that the inequality in Proposition \ref{cStauProp} is invariant if we subtract a number from each $\cS_\tau$. This amounts to translating $\Delta$. 
Thus, apply the same argument above that yields \eqref{Fuj2ndInEq} from Proposition \ref{cStauProp}
to $\tilde\cS_\tau:=\cS_\tau-\sigma$ to obtain \eqref{FujitaNakIneq} on $\ValXdiv$.
Alternatively, this follows directly from Lemma \ref{conv lem}. The extension to $\ValXfin$
then follows from the methods of \cite{BJ20}.
\epf

\bremark
\lb{weakerTianineqRem}
For $L$ ample, using $\cS_0\ge\frac{n+1}n\cS_1$, Proposition \ref{cStauProp} implies, in particular, that
$$
\bm{\delta}_\tau\ge\frac{n}{n+1-\tau^{1/n}}
$$
implies $\delta\ge1$. For $L$ big, by Proposition \ref{S comparison}, the same conclusion holds for $\bm{\tilde\delta}_\tau$
\eqref{FujitaTianGenIneq}. Note that for $\tau\in(0,1)$ this is weaker
than Theorem \ref{GenTianThm}.
\eremark

\subsection{Proof of Theorem \ref{GenTianThm}}
\label{generalized Tian}
% Since $L$ is ample, $\sigma(v)=0$ by Lemma \ref{ELMNPFujLem}.
%     By Proposition \ref{S comparison}, on
%     $\ValXdiv$,
%     $$
%         \frac{A}{\cS_\tau}\leq\begin{cases}
%             \displaystyle\frac{n}{n+1}\frac{A}{\cS_1},&\tau=0,\\
%             \\
%             \displaystyle\tau\left(1-\left(1-\tau\right)^\frac{n+1}{n}\right)^{-1}\frac{A}{\cS_1},&\tau\in\left(0,1\right].
%         \end{cases}
%     $$
%     Theorem \ref{GenTianThm} follows by taking infimum of both sides. The general big case follows similarly from
%     Definition \ref{tildedeltatauDef} and Proposition \ref{S comparison}.
\begin{proof}
    By Proposition \ref{S comparison}, on $\ValXdiv$,
    $$
        \cS_1\leq\begin{cases}
            \displaystyle\frac{n}{n+1}\left(\cS_0+\sigma/n\right),&\tau=0,\\
            \\
            \displaystyle\tau\left(1-\left(1-\tau\right)^\frac{n+1}{n}\right)^{-1}\left(\cS_\tau+\frac{1-\tau}{\tau}\left(1-\left(1-\tau\right)^\frac{1}{n}\right)\right),&\tau\in\left(0,1\right].
        \end{cases}
    $$
    Combining with Definition \ref{tildedeltatauDef},
    $$
        \delta=\inf_{v\in\ValX}\frac{A}{\cS_1}\geq\begin{cases}
            \displaystyle\frac{n+1}{n}\bm{\tilde\delta}_0,&\tau=0,\\
            \\
            \displaystyle\frac{1}{\tau}\left(1-\left(1-\tau\right)^\frac{n+1}{n}\right)\bm{\tilde\delta}_\tau,&\tau\in\left(0,1\right].
        \end{cases}
    $$
    This completes the proof.
\end{proof}

\section{K-stability of cubic surfaces with an Eckardt point}

The Calabi Conjecture for smooth del Pezzo surfaces was established by Tian in 1990 \cite{Tian90-2}.
The proof consisted of two main parts. First, computing the $\alpha$-invariant
and showing it is greater than $2/3$ unless the surface is a cubic surface in $\PP^3$
with an Eckardt point. Second, treating the latter special surfaces 
required deep analysis consisting of Tian's celebrated strong partial $C^0$-estimates
and more
complicated invariants introduced by Tian denoted $\alpha_{k,2}$ that he was 
able to estimate along a subsequence $\alpha_{6k,2}>2/3$.
This last estimate
was perhaps the most difficult technical part of the proof and consisted of a tour de force.
Twenty years later, Shi was able to simplify this estimate  
by showing that actually $\alpha_{k,2}>2/3$ (not just along a subsequence) \cite{Shi10},
which allows to rely only on a weaker version of the partial $C^0$-estimate. 
Nevertheless, even
with Shi's simplification the proof remains extremely difficult. 
In this section, we show how the $\bm{\delta}_\tau$
invariants and Theorem \ref{GenTianThm} give a simplified and conceptual
proof. The simplification is roughly in that in using $\tau$-percentage of sections with any
$\tau>0$ the computations are simpler than when taking just two sections as in $\alpha_{k,2}$,
which is a rather more degenerate situation (as $\lim_k 2/k = 0$).

    Let $S$ be a smooth cubic surface with an Eckardt point $p\in S$, i.e., there are 3 lines
    (by line we mean an intersection of a hyperplane in $\PP^3$ with $S$) on $S$ that contain $p$. Let $\sigma:\Bl_pS\to S$ denote the blow up at $p$ and $E$ the exceptional divisor on $\Bl_pS$. 
    For $v=\ord_E$ \cite[p. 71]{Cheltsov-book},
    \begin{equation}\label{A}
        A\left(v\right)=2,
    \end{equation}
    and
    $$
        \left|\Delta_{\geq t}\right|=\begin{cases}
            3-t^2,&t\in\left[0,1\right],\\
            \displaystyle\frac{\left(3-t\right)^2}{2},&t\in\left(1,3\right].
        \end{cases}
    $$
    Recall Definition \ref{prelimdef},
    $$
        \mu_v=\begin{cases}
            \displaystyle\frac{2}{3}tdt,&t\in\left[0,1\right],\\
            \displaystyle\left(1-\frac{t}{3}\right)dt,&t\in\left(1,3\right].
        \end{cases}
    $$
    The volume quantile is then
    \begin{equation}\label{tau(t)}
        \tau:=\tau\left(t\right)=\int_t^3d\mu_v=\frac{\left|\Delta_{\geq t}\right|}{\left|\Delta\right|}=\begin{cases}
            \displaystyle1-\frac{t^2}{3},&t\in\left[0,1\right],\\
            \displaystyle\frac{\left(3-t\right)^2}{6},&t\in\left(1,3\right],
        \end{cases}
    \end{equation}
    and (recall Definition \ref{prelimdef})
    $$
        \tau\cS_\tau\left(v\right)=\int_t^3sd\mu_v\left(s\right).
    $$
    When $t\in(1,3]$,
    $$
        \int_t^3sd\mu_v\left(s\right)=\int_t^3\frac{s\left(3-s\right)}{3}ds\xlongequal{r=3-s}\int_0^{3-t}\left(r-\frac{r^2}{3}\right)dr=\left.r^2\left(\frac{1}{2}-\frac{r}{9}\right)\right|_{r=0}^{3-t}=\frac{\left(3-t\right)^2\left(2t+3\right)}{18}.
    $$
    In particular,
    $$
        \int_1^3sd\mu_v\left(s\right)=\frac{10}{9}.
    $$
    When $t\in[0,1]$,
    $$
        \int_t^3sd\mu_v\left(s\right)=\int_t^1\frac{2}{3}s^2ds+\frac{10}{9}=\left.\frac{2}{9}s^3\right|_{s=t}^1+\frac{10}{9}=\frac{4}{3}-\frac{2}{9}t^3.
    $$
    In summary,
    $$
        \tau\cS_\tau\left(v\right)=\begin{cases}
            \displaystyle\frac{4}{3}-\frac{2}{9}t^3,&t\in\left[0,1\right],\\
            \displaystyle\frac{\left(3-t\right)^2\left(2t+3\right)}{18},&t\in\left(1,3\right].
        \end{cases}
    $$
    Solving for $t$ in \eqref{tau(t)},
    $$
        t=\begin{cases}
            \displaystyle3-\sqrt{6\tau},&\displaystyle\tau\in\left[0,\frac{2}{3}\right),\\
            \displaystyle\sqrt{3\left(1-\tau\right)},&\displaystyle\tau\in\left[\frac{2}{3},1\right].
        \end{cases}
    $$
    Therefore, when $\tau\in[0,\frac{2}{3})$, i.e., $t\in(1,3]$,
    $$
        \cS_\tau\left(v\right)=\frac{\left(3-t\right)^2\left(2t+3\right)}{18\tau}=\frac{2}{3}t+1=3-\frac{2}{3}\sqrt{6\tau}.
    $$
    When $\tau\in[\frac{2}{3},1]$, i.e., $t\in[0,1]$,
    $$
        \cS_\tau\left(v\right)=\frac{2}{9\tau}\left(6-t^3\right)=\frac{2}{3\tau}\left(2-\sqrt{3}\left(1-\tau\right)^\frac{3}{2}\right).
    $$
    In summary,
    $$
        \cS_\tau\left(v\right)=\begin{cases}
            \displaystyle3-\frac{2}{3}\sqrt{6\tau},&\displaystyle\tau\in\Big[0,\frac{2}{3}\Big),\\
            \displaystyle\frac{2}{3\tau}\left(2-\sqrt{3}\left(1-\tau\right)^\frac{3}{2}\right),&\displaystyle\tau\in \Big[\frac{2}{3},1\Big].
        \end{cases}
    $$
    By \eqref{A},
    $$
        \frac{A\left(v\right)}{\cS_\tau\left(v\right)}=\begin{cases}
            \displaystyle\frac{6}{9-2\sqrt{6\tau}},&\displaystyle\tau\in\Big[0,\frac{2}{3}\Big),\\
            \displaystyle3\tau\left(2-\sqrt{3}\left(1-\tau\right)^\frac{3}{2}\right)^{-1},&\displaystyle\tau\in\Big[\frac{2}{3},1\Big].
        \end{cases}
    $$

    \begin{figure}[ht]
        \centering
        \begin{tikzpicture}
            \draw(0,0)node[below left]{$0$};
            \draw[->](-.5,0)--(6.5,0)node[below]{$\tau$};
            \draw(6,0)node[below]{$1$};
            \draw[->](0,-.5)--(0,3.5);
            \filldraw(0,4/3)circle(1pt)node[left]{$\frac{2}{3}$};
            \filldraw(0,2)circle(1pt)node[left]{$1$};
            \filldraw(0,3)circle(1pt)node[left]{$\frac{3}{2}$};
            \draw[domain=0:4]plot(\x,{12/(9-2*sqrt(\x))});
            \draw[domain=4:6]plot(\x,{\x/(2-sqrt(3)*(1-\x/6)^1.5)});
            \draw[domain=1e-2:6,dashed]plot(\x,{\x/(3*(1-(1-\x/6)^1.5))});
            \draw[dotted](6,0)--(6,3);
        \end{tikzpicture}
        \caption{The graph of $\tau\mapsto\frac{A(v)}{\cS_\tau(v)}$. The dashed line is the lower bound for Theorem \ref{GenTianThm} to apply.}\label{compare}
    \end{figure}
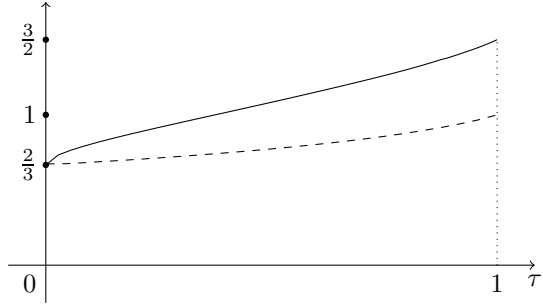

By Cheltsov's work, both $\alpha$ and $\delta$ are computed
by $v$ \cite[p. 71]{Cheltsov-book},\cite{Cheltsov08}; similar arguments can be used to show the same is true for all $\bm{\delta}_\tau$. Thus,
    for {\it any } $\tau\in(0,1]$, Theorem \ref{GenTianThm} implies that $\delta>1$. Indeed, when $\tau\in(0,\frac{2}{3})$, using Taylor expansion with Lagrange remainder, there is $\xi\in[0,\tau]$ such that
    $$
        \left(1-\tau\right)^\frac{3}{2}=1-\frac{3}{2}\tau+\frac{3}{8\sqrt{1-\xi}}\xi^2<1-\frac{3}{2}\tau+\frac{3\sqrt{3}}{8}\tau^2<1-\frac{3}{2}\tau+\frac{\sqrt{6}}{3}\tau^2<1-\frac{3}{2}\tau+\frac{\sqrt{6}}{3}\tau\sqrt{\tau},
    $$
    i.e.,
    $$
        \frac{\tau}{1-\left(1-\tau\right)^\frac{3}{2}}<\frac{6}{9-2\sqrt{6\tau}}.
    $$
    When $\tau\in[\frac{2}{3},1]$, since
    $$
        \left(3-\sqrt{3}\right)\left(1-\tau\right)^\frac{3}{2}\leq\left(3-\sqrt{3}\right)\cdot3^{-\frac{3}{2}}=\frac{\sqrt{3}-1}{3}<1,
    $$
    we have
    $$
        \frac{\tau}{1-\left(1-\tau\right)^\frac{3}{2}}=\frac{3\tau}{2-3\left(1-\tau\right)^\frac{3}{2}+1}<\frac{3\tau}{2-3\left(1-\tau\right)^\frac{3}{2}+\left(3-\sqrt{3}\right)\left(1-\tau\right)^\frac{3}{2}}=\frac{3\tau}{2-\sqrt{3}\left(1-\tau\right)^\frac{3}{2}}.
    $$
    See Figure \ref{compare}. 

\newpage

\bigskip
\bigskip
\textsc{University of Maryland}

\smallskip
{\tt cjin123@terpmail.umd.edu, yanir@alum.mit.edu}

\smallskip
\textsc{Peking University}

\smallskip
{\tt gtian@math.pku.edu.cn}

\end{document}